\theoremstyle{plain}
\newtheorem{thm}{Theorem}
\newtheorem{coroll}{Corollary}
\theoremstyle{definition}
\begin{document}
\title{A Note on the generating function of p-Bernoulli numbers}%
\author{Markus Kuba}
\date{\today}

\maketitle
\begin{abstract}
We use analytic combinatorics to give a direct proof of the closed formula for the generating function of $p$-Bernoulli numbers.
\end{abstract}

\emph{Keywords:} $p$-Bernoulli numbers, generating functions, partial differential equations. \\
\indent\emph{2010 Mathematics Subject Classification} 05A15.

\section{Introduction}
Rahmani~\cite{R} introduced $p$-Bernoulli numbers $B_{n,p}$, $p\ge 0$, generalizing the ordinary
Bernoulli number $B_n$, defined by their exponential generating function
\[
\sum_{n\ge 0}B_n\frac{t^n}{n!}=\frac{t}{e^t-1}.
\]
The $p$-Bernoulli numbers $B_{n,p}$ where defined using an infinite matrix, 
satifying $B_{0,p} = 1$ for $p\ge 0$, $B_{n,0}=B_n$ and the recurrence relation
\begin{equation}
\label{rec}
B_{n+1,p} = p B_{n,p} - \frac{(p + 1)^2}{p + 2}B_{n,p+1},\quad p\ge 0,\ n\ge 0.
\end{equation}
A closed form expression was already obtained by Rahmani~\cite{R}. Kargin and Rahmani~\cite{KR} derived the generating functions
\[
f_p(t):=\sum_{n\ge 0}B_{n,p}\frac{t^n}{n!},
\]
$p\ge 0$, using iterated integrals and induction as their main result and used it to give interesting evaluations of the $p$-Bernoulli numbers. Prodinger and Selkirk~\cite{PS} recently gave an elementary proof of the following result above (using induction in a neat way). 
\begin{thm}
\label{themain}
The exponential generating functions $f_p(t)$ of $p$-Bernoulli numbers $B_{n,p}$, defined via the recurrence relation~\eqref{rec},
are given by
\[
f_p(t)=\frac{(p+1)e^{pt}(t-H_p)}{(e^t-1)^{p+1}}+(p+1)\sum_{k=1}^{p}\binom{p}{k}\frac{H_k}{(e^t-1)^{k+1}}.
\]
Here and throughout this work $H_k=\sum_{j=1}^{k}\frac1j$ denotes the $k$-th harmonic number.
\end{thm}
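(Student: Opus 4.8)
The plan is to turn the recurrence~\eqref{rec} into a linear first-order partial differential equation for a suitable bivariate generating function, solve that PDE by the method of characteristics, and then read off the coefficients.

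\textbf{Step 1 (a differential-difference equation).} Multiplying~\eqref{rec} by $t^{n}/n!$ and summing over $n\ge 0$ replaces the left-hand side by $f_p'(t)$, giving
\[
f_p'(t)=p\,f_p(t)-\frac{(p+1)^2}{p+2}\,f_{p+1}(t),\qquad f_p(0)=B_{0,p}=1 .
\]
The unpleasant factor $(p+1)^2/(p+2)$ is removed by the substitution $g_p(t):=f_p(t)/(p+1)$, which yields the clean system
\[
g_p'(t)=p\,g_p(t)-(p+1)\,g_{p+1}(t),\qquad g_p(0)=\tfrac1{p+1}.
\]

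\textbf{Step 2 (the PDE).} Put $G(t,x):=\sum_{p\ge 0}g_p(t)x^{p}$. Using $\sum_p p\,g_p x^{p}=x\,\partial_x G$ and $\sum_p(p+1)g_{p+1}x^{p}=\partial_x\bigl(\sum_{q\ge 1}g_q x^{q}\bigr)=\partial_x G$, the system collapses to
\[
\partial_t G=(x-1)\,\partial_x G,\qquad G(0,x)=\sum_{p\ge 0}\frac{x^{p}}{p+1}=-\frac{\ln(1-x)}{x}.
\]
The characteristics satisfy $dx/dt=1-x$, with first integral $\xi=1-(1-x)e^{t}$, so $G(t,x)=\Phi\bigl(1-(1-x)e^{t}\bigr)$; matching the initial data forces $\Phi(\xi)=-\ln(1-\xi)/\xi$, hence
\[
G(t,x)=\frac{t+\ln(1-x)}{(1-x)e^{t}-1}.
\]
(One should note the mild analytic point that this holds for $x$ near $0$ and $t$ off the zero set of $(1-x)e^{t}-1$; the individual terms produced in the next step have poles at $t=0$ that cancel, just as they do in the statement of Theorem~\ref{themain}.)

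\textbf{Step 3 (coefficient extraction and rearrangement).} Expanding
\[
\frac{1}{(1-x)e^{t}-1}=\sum_{j\ge 0}\frac{e^{jt}}{(e^{t}-1)^{j+1}}x^{j},\qquad t+\ln(1-x)=t-\sum_{i\ge 1}\frac{x^{i}}{i},
\]
and taking $[x^{p}]$ gives
\[
f_p(t)=(p+1)g_p(t)=(p+1)\!\left(\frac{t\,e^{pt}}{(e^{t}-1)^{p+1}}-\sum_{i=1}^{p}\frac1i\,\frac{e^{(p-i)t}}{(e^{t}-1)^{p-i+1}}\right).
\]
To reach the stated form one re-expands each $e^{mt}=\sum_{j=0}^{m}\binom{m}{j}(e^{t}-1)^{j}$ so that everything except the leading $t\,e^{pt}/(e^{t}-1)^{p+1}$ is written in powers of $1/(e^{t}-1)$, and then compares the coefficient of $1/(e^{t}-1)^{k+1}$ on both sides. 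This comparison is exactly the harmonic-number identity
\[
\binom{p}{r}\,(H_p-H_r)=\sum_{i=1}^{p-r}\frac1i\binom{p-i}{r}\qquad(0\le r\le p),
\]
which follows by a short induction on $p$ via Pascal's rule (or may simply be cited).

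\textbf{Expected obstacle.} Setting up and solving the PDE is short and essentially mechanical, and the normalisation $g_p=f_p/(p+1)$ is what makes it so; the genuine work is the bookkeeping in Step 3 — repackaging the half-summed expression into the hybrid shape of Theorem~\ref{themain} and checking that the coefficients telescope through the harmonic identity. A secondary issue to treat carefully is the formal/analytic status of $G(t,x)$ and of the separate summands near $t=0$, where they are individually singular but jointly regular.
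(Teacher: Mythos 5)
Your proposal is correct and follows essentially the same route as the paper: the normalisation $g_p=f_p/(p+1)$, the first-order PDE solved by characteristics (your first integral $1-(1-x)e^{t}$ is an equivalent reparametrisation of the paper's $t+\log(1-z)$), the coefficient extraction yielding $f_p(t)=(p+1)\bigl(te^{pt}/(e^t-1)^{p+1}-\sum_{i=1}^{p}\tfrac1i e^{(p-i)t}/(e^t-1)^{p-i+1}\bigr)$, and the binomial re-expansion reducing the final comparison to the identity $\sum_{i=1}^{p-r}\tfrac1i\binom{p-i}{r}=\binom{p}{r}(H_p-H_r)$. The only (immaterial) deviations are that you match the characteristic solution at $t=0$ rather than $z=0$ and prove the harmonic-number identity by induction via Pascal's rule, where the paper uses a short generating-function computation.
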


\medskip 

The purpose of this short note is to use analytic combinatorics and generating functions to give an alternative direct proof, which is free of induction.
\begin{thm}
\label{theMein}
The bivariate generating function $G(z,t)=\sum_{p\ge 0}\sum_{n\ge 0} B_{n,p}\frac{t^n}{n!}\frac{z^p}{p+1}$
is given by 
\[
G(z,t)=\frac{t+\log(1-z)}{e^t(1-z)-1}.
\]
\end{thm}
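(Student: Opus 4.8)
The plan is to convert the defining recurrence~\eqref{rec} into a linear first-order partial differential equation for $G(z,t)$ and then integrate it along its characteristics. Writing $f_p(t)=\sum_{n\ge0}B_{n,p}\frac{t^n}{n!}$ we have $G(z,t)=\sum_{p\ge0}\frac{f_p(t)}{p+1}z^p$; the weight $\frac1{p+1}$ is not arbitrary but is precisely what will make the factor $\frac{(p+1)^2}{p+2}$ in~\eqref{rec} cooperate below. Since $f_p'(t)=\sum_{n\ge0}B_{n+1,p}\frac{t^n}{n!}$, multiplying~\eqref{rec} by $\frac{t^n}{n!}$ and summing over $n$ turns it into the family of ordinary differential equations
\[
f_p'(t)=p\,f_p(t)-\frac{(p+1)^2}{p+2}\,f_{p+1}(t),\qquad p\ge0.
\]

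Next I would multiply this by $\frac{z^p}{p+1}$ and sum over $p\ge 0$. The left-hand side becomes $\partial_t G$. On the right, the crucial observation is the elementary identity $\sum_{p\ge0}f_p(t)z^p=\partial_z\bigl(zG(z,t)\bigr)=G+z\,\partial_z G$, i.e.\ the weight $\frac1{p+1}$ is undone by the operator $g\mapsto\partial_z(zg)$. It gives at once $\sum_{p\ge0}\frac{p}{p+1}f_p z^p=z\,\partial_z G$ and, after the index shift $q=p+1$, also $\sum_{p\ge0}\frac{p+1}{p+2}f_{p+1}z^p=\partial_z G$. Hence the whole family of ODEs collapses to the single transport equation
\[
\partial_t G(z,t)=(z-1)\,\partial_z G(z,t),
\]
and the condition $B_{0,p}=1$ (i.e.\ $f_p(0)=1$ for all $p$) supplies the initial data $G(z,0)=\sum_{p\ge0}\frac{z^p}{p+1}=-\frac{\log(1-z)}{z}$.

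Finally I would solve this PDE by characteristics. The characteristic curves satisfy $\dot z=1-z$, whose invariant is $(1-z)e^t$, so $G(z,t)=\Phi\bigl((1-z)e^t\bigr)$ for a one-variable function $\Phi$; matching at $t=0$ and substituting $u=1-z$ yields $\Phi(u)=-\frac{\log u}{1-u}$, whence
\[
G(z,t)=\frac{-\log\!\bigl((1-z)e^t\bigr)}{1-(1-z)e^t}=\frac{t+\log(1-z)}{e^t(1-z)-1},
\]
which is the claimed formula. (One could equally well just verify that this closed form satisfies the transport equation and the initial condition, and appeal to uniqueness.) I expect the one genuinely load-bearing step to be the generating-function identity $\sum_p f_p z^p=\partial_z(zG)$ together with the index shift $q=p+1$: this is exactly what turns the two sums on the right into $z\,\partial_z G$ and $\partial_z G$ and produces the clean equation. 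Everything else — the passage from the number recurrence to the ODEs and the final integration — is routine, the only point of care being to keep track of the domain of convergence, or equivalently to carry out the computation at the level of formal power series in $z$.
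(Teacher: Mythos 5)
Your proposal is correct and follows essentially the same route as the paper: translate the recurrence into the ODE family for $f_p$, weight by $\frac{1}{p+1}$ to get the transport equation $G_t+(1-z)G_z=0$, and solve by characteristics with the invariant $(1-z)e^t$ (equivalently $t+\log(1-z)$). The only cosmetic difference is that you fit the arbitrary function of the invariant to the data at $t=0$, while the paper fits it at $z=0$ and checks consistency at $t=0$.
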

From the result above we easily obtain the earlier result as a Corollary using $f_p(t)=(p+1)[z^p]G(z,t)$.
\begin{coroll}
\label{coroll1}
The univariate generating function $f_p(t)=\sum_{n\ge 0}B_{n,p}\frac{t^n}{n!}$
is given by 
\[
f_p(t)=\frac{(p+1)t e^{pt}}{(e^t-1)^{p+1}}- (p+1)\sum_{k=1}^{p}\frac{1}{k}\frac{e^{{p-k}t}}{(e^t-1)^{p-k+1}}
\]
and concides with the expression in Theorem~\ref{themain}.
\end{coroll}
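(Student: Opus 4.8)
The plan is to read $f_p(t)=(p+1)[z^p]G(z,t)$ off the closed form for $G$ in Theorem~\ref{theMein} and then to verify that the expression obtained agrees with the one in Theorem~\ref{themain}. For the first part I would rewrite the denominator of $G$ as $e^t(1-z)-1=(e^t-1)\bigl(1-\tfrac{e^t}{e^t-1}z\bigr)$, so that, as a power series in $z$ with coefficients rational in $e^t$,
\[
\frac{1}{e^t(1-z)-1}=\sum_{m\ge 0}\frac{e^{mt}}{(e^t-1)^{m+1}}\,z^m.
\]
Multiplying this by $t+\log(1-z)=t-\sum_{j\ge 1}\frac{z^j}{j}$ and picking out the coefficient of $z^p$ gives at once
\[
[z^p]G(z,t)=\frac{t\,e^{pt}}{(e^t-1)^{p+1}}-\sum_{k=1}^{p}\frac1k\,\frac{e^{(p-k)t}}{(e^t-1)^{p-k+1}},
\]
and multiplying by $p+1$ produces the first displayed formula of the Corollary. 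This step is entirely mechanical.

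It then remains to match this with Theorem~\ref{themain}. Here I would substitute $e^t=1+1/u$, i.e.\ $u=1/(e^t-1)$, under which $\frac{e^{jt}}{(e^t-1)^{j+1}}=(1+u)^{j}u$ for every $j\ge 0$ and $\frac{1}{(e^t-1)^{j+1}}=u^{j+1}$. Both closed forms then have the shape $(p+1)\,t\,(1+u)^{p}u+(\text{a polynomial in }u)$; the terms carrying $t$ coincide, and after cancelling the common factor $(p+1)u$ from the remaining polynomials the claimed coincidence reduces to the polynomial identity
\[
\sum_{k=1}^{p}\binom{p}{k}H_k\,u^{k}=H_p(1+u)^{p}-\sum_{k=1}^{p}\frac1k\,(1+u)^{p-k}.
\]
Comparing the coefficient of $u^{m}$ on both sides for $0\le m\le p$ further reduces everything to the single scalar identity
\[
\sum_{k=1}^{p-m}\frac1k\binom{p-k}{m}=\binom{p}{m}\bigl(H_p-H_m\bigr).
\]

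I expect this last identity to be the only real content of the argument; everything else is bookkeeping. I would prove it by induction on $p$, splitting $\binom{p-k}{m}=\binom{p-1-k}{m}+\binom{p-1-k}{m-1}$ and applying the inductive hypothesis to the parameters $m$ and $m-1$, after which the two pieces collapse via $\binom{p-1}{m}+\binom{p-1}{m-1}=\binom{p}{m}$ and $\tfrac1m\binom{p-1}{m-1}=\tfrac1p\binom{p}{m}$; a purely generating-function proof is also available, from the evaluation $\sum_{p\ge 0}\bigl(\sum_{k\ge1}\tfrac1k\binom{p-k}{m}\bigr)x^{p}=\dfrac{-x^{m}\log(1-x)}{(1-x)^{m+1}}$. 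Although Theorem~\ref{theMein} itself is proved without induction, this short induction is harmless, since the Corollary only records the translation of Theorem~\ref{theMein} into the normalization used in the earlier literature.
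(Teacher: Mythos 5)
Your proposal is correct and is essentially the paper's own proof: the same geometric-series expansion of $\frac{1}{e^t(1-z)-1}$ and extraction of $[z^p]$ gives the first formula, and your substitution $u=1/(e^t-1)$ with comparison of coefficients of $u^m$ is exactly the paper's Laurent expansion around $x=e^t=1$, both reducing to the same identity $\sum_{k=1}^{p-m}\frac1k\binom{p-k}{m}=\binom{p}{m}(H_p-H_m)$ (the paper's $\sum_{k=j}^{n-1}\binom{k}{j}\frac{1}{n-k}=\binom{n}{j}(H_n-H_j)$ after reindexing). The only cosmetic difference is that the paper proves this identity by the generating-function extraction you mention as an alternative, keeping the note induction-free, whereas your primary argument inducts on $p$ (which does go through as sketched).
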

\begin{proof}[Proof of Corollary~\ref{coroll1}]
We need the expansions
\[
\frac{1}{e^t(1-z)-1}=\frac{1}{e^t-1-ze^t}=\frac{1}{(e^t-1)\big(1-z\frac{e^t}{e^t-1}\big)}
=\frac{1}{e^t-1}\sum_{k\ge 0}\frac{z^k e^{kt}}{(e^t-1)^k}
\]
as well as
\[
\frac{\log(1-z)}{e^t(1-z)-1}=-\sum_{p\ge 1}z^p\sum_{k=1}^{p}\frac{1}{k}\frac{e^{{p-k}t}}{(e^t-1)^{p-k+1}}.
\]
Thus, extraction of coefficients
\[
f_p(t)=(p+1)[z^p]G(z,t)=(p+1)[z^p]\frac{t}{e^t(1-z)-1} + (p+1)[z^p]\frac{\log(1-z)}{e^t(1-z)-1}
\]
gives the stated expression. In order to observe the equality of the two different expressions, we state the following
\begin{align*}
-\sum_{k=1}^{p}\frac1k \frac{x^{p-k}}{(x-1)^{p-k+1}}&=\sum_{\ell=0}^{p-1}\binom{p}\ell \frac{1}{(x-1)^{\ell+1}}(H_\ell-H_p)\\
&= \sum_{\ell=0}^{p}\binom{p}\ell \frac{1}{(x-1)^{\ell+1}}H_\ell- H_p\frac{x^p}{(x-1)^{p+1}}.
\end{align*}
The second expression is simply the Laurent series expansion of the first around $x=1$. 
The prove our statement we write $x^k=(x-1+1)^k$ and use the binomial theorem:
\[
\sum_{k=1}^{p}\frac1k \frac{x^{p-k}}{(x-1)^{p-k+1}}
= \sum_{k=1}^{p}\frac1k \frac{1}{(x-1)^{p-k+1}}\sum_{\ell=0}^{p-k}\binom{p-k}{\ell}(x-1)^{p-k-\ell}.
\]
Cancellations and changing the order of summation give
\[
\sum_{\ell=0}^{p-1}\frac1{(x-1)^{\ell+1}}\sum_{k=1}^{p-\ell}\binom{p-k}{\ell}\frac1k.
\]
Application of the well-known identity, 
\[
\sum_{k=j}^{n-1}\binom{k}{j}\frac1{n-k}=\binom{n}{j}(H_n-H_j),
\]
which follows directly from 
\begin{align*}
 \sum_{k=j}^{n-1}\binom{k}{j}\frac{1}{n-k}&=
    \sum_{k=j}^{n-1}[z^{k}]\frac{z^{j}}{(1-z)^{j+1}}
    [z^{n-k}]\log\Bigl(\frac{1}{1-z}\Bigr)\\
		&=[z^{n}]\frac{z^{j}}{(1-z)^{j+1}}%
    \log\Bigl(\frac{1}{1-z}\Bigr)\\
		&= [z^{n-j}]\frac{1}{(1-z)^{j+1}}\log\Bigl(\frac{1}{1-z}\Bigr)=
    \binom{n}{j}(H_n-H_j)
\end{align*}
gives the desired intermediate result. Note that the last identity follows
from 
\[
\frac{1}{(1-z)^{m+1}}=\sum_{n\ge 0}\binom{n+m}{n}z^n
\]
after differentiation with respect to $m$. 

\smallskip

Finally, we extend the range of summation to $\ell=p$ 
and use
\[
\sum_{\ell=0}^{p}\binom{p}\ell \frac{1}{(x-1)^{\ell+1}}
=\frac1{x-1} \Big(1+\frac1{x-1}\Big)^{p}=\frac{x^p}{(x-1)^{p+1}}.
\]
With $x=e^{t}$ we get the required identity. 
\end{proof}

\begin{proof}[Proof of Theorem~\ref{theMein}]
Following~\cite{PS} we translate the recurrence relation~\eqref{rec} into a recurrence relation
for $f_p(t)$ by multiplication with $\frac{t^n}{n!}$ and summing over $n\ge 0$.
\begin{equation}
f'_p(t)=p f_p(t) -\frac{(p+1)^2}{p+2}f_{p+1}(t),\quad p\ge 0.
 \label{diffrec}
\end{equation}
In order to obtain Theorem~\ref{themain} we use generating functions. We proceed by normalizing the recurrence relation to avoid a more difficult problem. Let 
\[
g_p(t):=\frac{f_p(t)}{p+1}.
\]
Thus, the recurrence relation~\eqref{diffrec} simplifies to
\[
g'_p(t)=p\cdot g_p(t)-(p+1) g_{p+1}(t),\quad p\ge 0.
\]
We introduce the bivariate generating function
\[
G(z,t):=\sum_{p\ge 0}g_p(t)z^p=\sum_{p\ge 0}f_p(t)\frac{z^p}{p+1}=\sum_{p\ge 0}\sum_{n\ge 0} B_{n,p}\frac{t^n}{n!}\frac{z^p}{p+1}.
\]
and obtain
\[
G_t(z,t)=z G_z(z,t) - G_z(z,t).
\]
Initial values are given by
\[
G(z,0)=\sum_{p\ge 0}g_p(0)z^p
=\sum_{p\ge 0}\frac1{p+1}z^p
=\frac1z \log(1/(1-z))
\] 
and
\[
G(0,t)=g_0(t)=f_0(t)=\frac{t}{e^t-1}.
\]
Thus, our desired generating function satisfies a first order linear partial differential equation:
\[
G_t(z,t) + (1-z)G_z(z,t) =0. 
\]
We use the method of characteristics to solve the differential equation. Let $t=t(x)$ and $z=z(x)$. A first integral of
\[
\frac{d}{dx}t=1,\quad \frac{d}{dx}z=1-z
\]
or equivalently 
\[
t'(z)=\frac{1}{1-z}
\]
is given by 
\[
\xi=\xi(z,t)=t+\log(1-z).
\]
Hence, any continuous function $C(\xi)=C(t+\log(1-z))$ is 
a solution of the equation.
For $z=0$ we get
\[
C(t)=\frac{t}{e^t-1}.
\]
Thus
\[
C(\log(1-z))=\frac{\log(1-z)}{\exp(\log(1-z))-1}
=\frac{\log(1-z)}{1-z-1}=\frac1z \log(1/(1-z)),
\]
matching the initial value for $t=0$. 
Therefore, our generating function is given by
\[
G(z,t)=\frac{t+\log(1-z)}{\exp\Big(t+\log(1-z)\Big)-1}
=\frac{t+\log(1-z)}{e^t(1-z)-1}.
\]
\end{proof}


\begin{thebibliography}{00}
\bibitem{KR} 
L.~Kargin and M.~Rahmani. A closed formula for the generating function of p-Bernoulli
numbers. \emph{Quaestiones Mathematicae}, xxx:1–9, 2018. \texttt{arxiv: 1702.06420}.

\bibitem{PS}
H.~Prodinger and S.J.~Selkirk, Closed formula for the generating function of $p$-Bernoulli numbers:
an elementary approach. Submitted. Online available: \texttt{arxiv: 1806.05968}.

\bibitem{R} M.~Rahmani. On p-Bernoulli numbers and polynomials. \emph{Journal of Number Theory}, 157:350–366,
2015.

\end{thebibliography}
\end{document}